\newtheorem{thm}{Theorem}[section]
\newtheorem{cor}[thm]{Corollary}
\newtheorem{lem}[thm]{Lemma}
\newtheorem{exm}[thm]{Example}
\newtheorem{prop}[thm]{Proposition}
\theoremstyle{definition}
\theoremstyle{remark}
\numberwithin{equation}{section}
\begin{document}
\title[A note on the singularity category of an endomorphism ring]
{A note on the singularity category of an endomorphism ring}

\author[Xiao-Wu Chen] {Xiao-Wu Chen}

\thanks{The author is supported by National Natural Science Foundation of China (No.11201446) and NCET-12-0507.}
\subjclass[2010]{18E30, 13E10, 16E50}
\date{\today}

\thanks{E-mail:
xwchen$\symbol{64}$mail.ustc.edu.cn}
\keywords{singularity category, triangle equivalence, generator, endomorphism ring}%

\maketitle

\dedicatory{}%
\commby{}%

\begin{abstract}
We propose the notion of partial resolution of a ring, which is by definition the endomorphism ring of a certain generator of the given ring. We prove that the singularity category of the partial resolution is a quotient of the singularity category of the given ring. Consequences and examples are given.
\end{abstract}

\section{Introduction}
Let $A$ be a left coherent ring with a unit. Denote by $A\mbox{-mod}$ the category of finitely presented left $A$-modules and by $\mathbf{D}^b(A\mbox{-mod})$ the bounded derived category. Following \cite{Buc, Or04}, the \emph{singularity category}  $\mathbf{D}_{\rm sg}(A)$ of $A$ is the Verdier quotient category of  $\mathbf{D}^b(A\mbox{-mod})$ with respect to the subcategory formed by perfect complexes. We denote by $q\colon \mathbf{D}^b(A\mbox{-mod})\rightarrow \mathbf{D}_{\rm sg}(A)$ the quotient functor.  The singularity category measures the homological singularity of $A$.

Let $_AM$ be a finitely presented $A$-module. Denote by ${\rm add}\; M$ the full subcategory consisting of direct summands of finite direct sums of $M$. A \emph{finite $M$-resolution} of an $A$-module $X$ means an exact sequence $0\rightarrow N^{-n}\rightarrow N^{1-n}\rightarrow \cdots \rightarrow N^{-1}\rightarrow N^0\rightarrow X$ with each $N^{-i}\in {\rm add}\; M$ which remains exact after applying the functor ${\rm Hom}_A(M, -)$.

Recall that an $A$-module $M$ is a \emph{generator} if $A$ lies in ${\rm add}\; M$. We consider the opposite ring of its endomorphism ring $\Gamma={\rm End}_A(M)^{\rm op}$, and $M$ becomes an $A$-$\Gamma$-bimodule. In particular,  if $\Gamma$ is left coherent, we have the functor $M\otimes_\Gamma- \colon \Gamma\mbox{-mod}\rightarrow A\mbox{-mod}$.

Let $M$ be a generator with $\Gamma={\rm End}_A(M)^{\rm op}$. Following the idea of \cite{Vandb}, we call $\Gamma$ a \emph{partial resolution} of $A$ if $\Gamma$ is left coherent and  any $A$-module $X$ has a finite $M$-resolution, provided that it fits into an exact sequence $0\rightarrow X\rightarrow N^1\rightarrow N^2\rightarrow 0$ with $N^i\in {\rm add}\; M$. We mention that similar idea might trace back to \cite{Au}.

 The following result justifies the terminology: the partial resolution $\Gamma$ has ``better" singularity than $A$. The result is inspired by \cite[Theorem 1.2]{KZ}, where the case that $A$ is an artin algebra with finitely many indecomposable Gorenstein projective modules is studied.

 \begin{prop}\label{prop:1}
 Let $A$ be a left coherent ring, and let $\Gamma$ be a partial resolution of $A$ as above. Then there is a triangle equivalence
 $$\mathbf{D}_{\rm sg}(\Gamma) \stackrel{\sim}\longrightarrow \mathbf{D}_{\rm sg}(A)/\langle q(M)\rangle  $$
induced by the functor $M\otimes_\Gamma-$.
 \end{prop}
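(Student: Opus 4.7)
My plan is to construct mutually quasi-inverse triangle functors: $\bar F\colon \mathbf{D}_{\rm sg}(\Gamma) \to \mathbf{D}_{\rm sg}(A)/\langle q(M)\rangle$ induced by $F := M\otimes_\Gamma -$, and an inverse $\bar G$ induced by $G := {\rm Hom}_A(M,-)$; the equivalence will follow from the $(F,G)$-adjunction.

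The preparatory observation is Morita-type: since $M$ is a generator and $\Gamma = {\rm End}_A(M)^{\rm op}$, the functor $G$ restricts to an additive equivalence ${\rm add}\;M \xrightarrow{\sim} {\rm add}\;\Gamma$ with quasi-inverse $F$, and the unit and counit of $(F,G)$ are isomorphisms on these subcategories. Consequently, $F$ sends any perfect $\Gamma$-complex to a bounded ${\rm add}\;M$-complex, hence to an object of $\langle q(M)\rangle$ in $\mathbf{D}_{\rm sg}(A)$; dually $G(M) = \Gamma$ is perfect. Thus in principle $F$ descends to $\bar F$ and $G$ to $\bar G$.

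The key reformulation of the partial resolution hypothesis is that any $A$-module $X$ fitting into $0 \to X \to N^1 \to N^2 \to 0$ with $N^i \in {\rm add}\;M$ satisfies $q(X) \in \langle q(M)\rangle$, since a finite ${\rm add}\;M$-resolution of $X$ is a quasi-isomorphism to a bounded ${\rm add}\;M$-complex. The non-trivial step is to promote $F$ to a triangle functor despite its non-exactness: for a short exact sequence $0 \to N' \to N \to N'' \to 0$ in $\Gamma\mbox{-mod}$, the tail of the Tor long exact sequence gives $0 \to {\rm Tor}_1^\Gamma(M, N'') \to F(N') \to F(N) \to F(N'') \to 0$. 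A careful analysis using projective presentations of $N''$ should realise ${\rm Tor}_1^\Gamma(M,N'')$ as an $A$-module of the above form, hence trivial in the quotient; applying the same to the syzygies $\Omega^i N''$ handles all higher Tors, so the Tor sequence becomes a distinguished triangle in the quotient and extends $F$ to the required $\bar F$.

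Dually, $\bar G$ is obtained from $G$-exact ${\rm add}\;M$-resolutions of $A$-modules (available because $A \in {\rm add}\;M$ lets one iterate right ${\rm add}\;M$-approximations), yielding projective $\Gamma$-resolutions and hence objects in $\mathbf{D}_{\rm sg}(\Gamma)$; since $G(M) = \Gamma$ is perfect, $\bar G$ kills $\langle q(M)\rangle$. The unit and counit of the $(F,G)$-adjunction are isomorphisms on ${\rm add}\;M$ and ${\rm add}\;\Gamma$ respectively, and on general objects their mapping cones lie in bounded complexes of these subcategories, which vanish in the respective singularity quotients, giving $\bar F\bar G \simeq {\rm id}$ and $\bar G\bar F \simeq {\rm id}$. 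I expect the main obstacle to be the triangulated extension of $\bar F$; everything reduces to the careful identification of each ${\rm Tor}_i^\Gamma(M,-)$ with an $A$-module killed by the partial resolution hypothesis.
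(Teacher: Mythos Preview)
You are missing the classical fact that makes everything collapse: because $_AM$ is a generator, the right $\Gamma$-module $M_\Gamma$ is projective (it is a direct summand of ${\rm Hom}_A(M^n,M)\simeq\Gamma^n$ once $A$ is a summand of $M^n$). Hence $F=M\otimes_\Gamma-$ is \emph{exact}, all ${\rm Tor}^\Gamma_i(M,-)$ vanish for $i\ge 1$, and the ``non-trivial step'' you single out --- promoting $F$ to a triangle functor --- is automatic. Your proposed workaround is not only unnecessary but, as stated, does not go through: from a presentation $P^1\to P^0\to N''\to 0$ the kernel of $FP^1\to FP^0$ does not sit in a short exact sequence $0\to X\to N^1\to N^2\to 0$ with both $N^i\in{\rm add}\;M$, since $FP^1\to FP^0$ is typically not surjective and its image need not lie in ${\rm add}\;M$.

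The paper exploits the exactness of $F$ directly. Its essential kernel $\mathcal N\subseteq\Gamma\mbox{-mod}$ is a Serre subcategory, and (using that $G$ is fully faithful on big modules) $F$ induces an abelian equivalence $\Gamma\mbox{-mod}/\mathcal N\simeq A\mbox{-mod}$; Miyachi's theorem then upgrades this to $\mathbf D^b(\Gamma\mbox{-mod})/\langle\mathcal N\rangle\simeq\mathbf D^b(A\mbox{-mod})$, and iterated Verdier quotients give $\mathbf D_{\rm sg}(\Gamma)/\langle q(\mathcal N)\rangle\simeq\mathbf D_{\rm sg}(A)/\langle q(M)\rangle$. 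Finally the partial-resolution hypothesis, translated to the $\Gamma$-side, says exactly that every $Y\in\mathcal N$ has finite projective dimension, so $\langle q(\mathcal N)\rangle=0$ in $\mathbf D_{\rm sg}(\Gamma)$. Your direct construction of an inverse $\bar G$ is also shaky in this generality: it is not clear that $G={\rm Hom}_A(M,-)$ carries $A\mbox{-mod}$ into $\Gamma\mbox{-mod}$, nor that an arbitrary $X\in A\mbox{-mod}$ admits a \emph{finite} $G$-acyclic ${\rm add}\;M$-resolution, both of which you would need. The localization route avoids these issues entirely.
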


 Here, we identify the $A$-module $M$ as the stalk complex concentrated at degree zero, and then $q(M)$ denotes the image in
 $\mathbf{D}_{\rm sg}(A)$. We denote by $\langle q(M)\rangle$ the smallest triangulated subcategory of $\mathbf{D}_{\rm sg}(A)$ that contains $q(M)$ and is closed under taking direct summands. Then $\mathbf{D}_{\rm sg}(A)/\langle q(M) \rangle$ is the corresponding Verdier quotient category.

 The aim of this note is to prove Proposition \ref{prop:1} and discuss related results and examples for artin algebras.

 \section{The proof of Proposition \ref{prop:1}}

 To give the proof, we collect in the following lemma some well-known facts. Throughout, $A$ is a left coherent ring and $_AM$ is a generator with $\Gamma={\rm End}_A(M)^{\rm op}$ left coherent.

 Recall the functor $M\otimes_\Gamma -\colon \Gamma\mbox{-mod}\rightarrow A\mbox{-mod}$. Denote by $\mathcal{N}$ the essential kernel of $M\otimes_\Gamma -$, that is, the full subcategory of $\Gamma\mbox{-mod}$ consisting of $_\Gamma Y$ such that $M\otimes_\Gamma Y\simeq 0$.

 We will also consider the category $A\mbox{-Mod}$ of arbitrary left $A$-modules. Note that the functor $M\otimes_\Gamma -\colon \Gamma\mbox{-Mod}\rightarrow A\mbox{-Mod}$ is left adjoint to  ${\rm Hom}_A(M, -)\colon A\mbox{-Mod}\rightarrow \Gamma\mbox{-Mod}$. Denote by $\mathcal{N}'$ the essential kernel of $M\otimes_\Gamma -\colon \Gamma\mbox{-Mod}\rightarrow A\mbox{-Mod}$. The functor ${\rm Hom}_A(M, -)$ induces an equivalence ${\rm add}\; M\simeq \Gamma\mbox{-proj}$, where $\Gamma\mbox{-proj}$ denotes the category of finitely generated projective $\Gamma$-modules.

For a class $\mathcal{S}$ of objects in a triangulated category $\mathcal{T}$, we denote by $\langle \mathcal{S}\rangle$ the smallest triangulated subcategory that contains $\mathcal{S}$ and is closed under taking direct summands. For example, the subcategory  of $\mathbf{D}^b(A\mbox{-mod})$ formed by perfect complexes equals $\langle A\rangle$; here, we view a module as a stalk complex concentrated on degree zero.

 \begin{lem}\label{lem:1}
 Let the $A$-module $M$ be a generator and $\Gamma={\rm End}_A(M)^{\rm op}$. Then the following results hold.
 \begin{enumerate}
 \item the functor ${\rm Hom}_A(M, -)\colon A\mbox{-{\rm Mod}}\rightarrow \Gamma\mbox{-{\rm Mod}}$ is fully faithful;
 \item the right $\Gamma$-module $M_\Gamma$ is projective, and then the subcategory $\mathcal{N}$ of $\Gamma\mbox{-{\rm mod}}$ is a Serre subcategory, that is, it is closed under submodules, factor modules and extensions;
 \item the functor   $M\otimes_\Gamma -\colon \Gamma\mbox{-{\rm mod}}\rightarrow A\mbox{-{\rm mod}}$ induces an equivalence $\Gamma\mbox{-{\rm mod}}/\mathcal{N}\stackrel{\sim}\longrightarrow A\mbox{-{\rm mod}}$, where $\Gamma\mbox{-{\rm mod}}/\mathcal{N}$ denotes the quotient category of $\Gamma\mbox{-{\rm mod}}$ with respect to the Serre subcategory $\mathcal{N}$;
     \item the functor   $M\otimes_\Gamma -\colon \Gamma\mbox{-{\rm mod}}\rightarrow A\mbox{-{\rm mod}}$ induces a triangle equivalence \;  $\mathbf{D}^b(\Gamma\mbox{-{\rm mod}})/\langle \mathcal{N}\rangle \stackrel{\sim}\longrightarrow \mathbf{D}^b(A\mbox{-{\rm mod}})$.
 \end{enumerate}
 \end{lem}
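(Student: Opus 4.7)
My plan is to handle the four parts in order, since (2) refines (1), (3) builds on (1)--(2), and (4) promotes (3) to the derived level.

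For (1) and (2) I would exploit the generator hypothesis directly. Since $A\in{\rm add}\;M$, there is a splitting $M^n\simeq A\oplus A'$ of $A$-modules. Applying ${\rm Hom}_A(M,-)$ gives the stated equivalence ${\rm add}\;M\simeq \Gamma\text{-proj}$, so ${\rm Hom}_A(M,-)$ is fully faithful on ${\rm add}\;M$; equivalently, the counit $M\otimes_\Gamma{\rm Hom}_A(M,-)\to {\rm id}$ is an isomorphism on ${\rm add}\;M$. Using that every $X\in A\text{-Mod}$ fits into a presentation $M^{(I)}\to M^{(J)}\to X\to 0$ and right-exactness of $M\otimes_\Gamma -$, the counit extends to an isomorphism for all $X$; by the adjunction, ${\rm Hom}_A(M,-)$ is fully faithful. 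Dually, applying ${\rm Hom}_A(-,M)$ to $M^n\simeq A\oplus A'$ realizes $M_\Gamma$ as a direct summand of $\Gamma^n$, hence projective. The resulting exactness of $M\otimes_\Gamma -$ makes $\mathcal{N}$ closed under submodules, quotients and extensions, i.e.\ Serre.

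For (3), exactness of $M\otimes_\Gamma -$ yields an induced exact functor $\bar F\colon \Gamma\text{-mod}/\mathcal{N}\to A\text{-mod}$. Essential surjectivity comes from a presentation $M^a\to M^b\to X\to 0$ (available since $M$ is a generator and $X$ is finitely presented), which lifts through ${\rm Hom}_A(M,-)$ to a map $\Gamma^a\to\Gamma^b$ whose cokernel $Y$ satisfies $M\otimes_\Gamma Y\simeq X$ by right-exactness. Fullness and faithfulness reduce, via the adjunction and the standard description of ${\rm Hom}$ in a Serre quotient as a filtered colimit over sub/quotient pairs with difference in $\mathcal{N}$, to two statements: the counit is an isomorphism (from (1)), and the unit $Y\to {\rm Hom}_A(M,M\otimes_\Gamma Y)$ has kernel and cokernel in $\mathcal{N}$, which is a direct diagram chase once the counit iso is in hand.

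For (4), I would invoke a standard derived-quotient theorem (cf.\ Miyachi): given that $\mathcal{N}$ is Serre in $\Gamma\text{-mod}$ with $\Gamma\text{-mod}/\mathcal{N}\simeq A\text{-mod}$ by (3), the natural functor $\mathbf{D}^b(\Gamma\text{-mod})/\langle\mathcal{N}\rangle\to \mathbf{D}^b(\Gamma\text{-mod}/\mathcal{N})=\mathbf{D}^b(A\text{-mod})$ is a triangle equivalence, provided the localization behaves well. The main technical point is to show that $\langle\mathcal{N}\rangle$ coincides with the full triangulated subcategory of complexes with cohomology in $\mathcal{N}$, and that every complex and every morphism downstairs is represented by a genuine $\Gamma$-module complex. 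This is where the exactness of $M\otimes_\Gamma -$ and the full faithfulness of its right adjoint from (1) are the crucial inputs; they let one lift cohomology objects and splice short exact sequences to produce the required preimages and roofs. The induced triangle functor is then naturally isomorphic to the one defined by $M\otimes_\Gamma -$, completing the argument. I expect this last derived-level reduction to be the main obstacle, while parts (1)--(3) are essentially classical Morita--Gabriel bookkeeping.
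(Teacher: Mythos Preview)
Your proposal is correct and follows the same overall architecture as the paper: parts (1)--(2) are classical Morita--Gabriel facts, (3) is a Serre--quotient description of $M\otimes_\Gamma-$, and (4) is Miyachi's derived--quotient theorem applied to (3). The paper's own proof is essentially a list of citations (Stenstr\"om for (1)--(2), Gabriel--Zisman and Krause for (3), Miyachi for (4)), so your write-up is a more self-contained version of the same strategy rather than a different one.

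Two small points of comparison are worth flagging. First, in your argument for (1) the five-lemma step needs the presentation $M^{(I)}\to M^{(J)}\to X\to 0$ to remain right-exact after applying ${\rm Hom}_A(M,-)$; this is not automatic for an arbitrary presentation, but holds for the canonical one (indexing by ${\rm Hom}_A(M,X)$), so you should say so. Second, for (3) the paper takes a slightly different path: it first deduces $\Gamma\text{-Mod}/\mathcal{N}'\simeq A\text{-Mod}$ from the full faithfulness of the right adjoint in (1), and then passes down to finitely presented modules via a restriction result of Krause. Your direct argument inside $\Gamma\text{-mod}$ is equally valid and arguably cleaner, since you avoid the detour through big modules; the paper's route, on the other hand, sidesteps having to verify by hand that unit kernels/cokernels lie in $\mathcal{N}$. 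For (4), both you and the paper invoke Miyachi; your remark that $\langle\mathcal{N}\rangle$ coincides with $\mathbf{D}^b_{\mathcal{N}}(\Gamma\text{-mod})$ via the cohomology filtration is exactly the d\'evissage needed and is implicit in the paper's citation.
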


\begin{proof}
(1) is contained in \cite[Theorem X.4.1(i)]{St}, and (2) is contained in \cite[Proposition IV. 6.7(i)]{St}. In particular, the functor $M\otimes_\Gamma-$ is exact. Then  we recall that the essential kernel of any exact functor between abelian categories is a Serre subcategory. We infer from (1) an equivalence $\Gamma\mbox{-{\rm Mod}}/\mathcal{N}'\stackrel{\sim}\longrightarrow A\mbox{-{\rm Mod}}$; consult \cite[Proposition I.1.3 and Subsection 2.5 d)]{GZ}. Then (3) follows from \cite[Proposition A.5]{Kra01}. Thanks to (3), the last statement follows from a general result \cite[Theorem 3.2]{Mi}.
\end{proof}

 The argument in the proof of the following result is essentially contained in  the proof of \cite[Chapter III, Section 3, Theorem]{Au}.

 \begin{lem}\label{lem:2}
 Keep the notation as above. Then any $\Gamma$-module in $\mathcal{N}$ has finite projective dimension if and only if $\Gamma$ is a partial resolution of $A$.
 \end{lem}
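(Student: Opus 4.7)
The plan is to exploit the equivalence $\mathrm{add}\; M \simeq \Gamma\mbox{-proj}$ provided by $\mathrm{Hom}_A(M,-)$ in order to convert, in both directions, finitely presented objects of $\mathcal{N}$ into short exact sequences $0\to X\to N^1\to N^2\to 0$ with $N^i\in\mathrm{add}\; M$, and to identify ``finite projective dimension in $\Gamma\mbox{-mod}$'' with ``admits a finite $M$-resolution''.

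First I would establish the following dictionary. Given $Y\in\mathcal{N}$ finitely presented, take a projective presentation $P_1\xrightarrow{g} P_0\to Y\to 0$ in $\Gamma\mbox{-mod}$. Under the equivalence $\mathrm{add}\; M\simeq\Gamma\mbox{-proj}$, the map $g$ corresponds to some $\bar g\colon N_1\to N_0$ with $N_i\in\mathrm{add}\; M$. Applying the exact functor $M\otimes_\Gamma-$ (exact by Lemma \ref{lem:1}(2)) and using $M\otimes_\Gamma Y=0$, the sequence $N_1\xrightarrow{\bar g}N_0\to 0$ is exact, so $\bar g$ is surjective; set $X:=\ker\bar g$. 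Then $0\to X\to N_1\to N_0\to 0$ is exact in $A\mbox{-mod}$. Conversely, starting from such a short exact sequence $0\to X\to N^1\to N^2\to 0$, set $Z:=\mathrm{coker}(\mathrm{Hom}_A(M,N^1)\to\mathrm{Hom}_A(M,N^2))$; applying $M\otimes_\Gamma-$ to its presentation gives $N^1\to N^2\to M\otimes_\Gamma Z\to 0$, forcing $Z\in\mathcal{N}$.

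The central observation is that in both constructions, $\mathrm{Hom}_A(M,X)$ coincides with the second syzygy $\Omega^2$ of the corresponding $\mathcal{N}$-module. Indeed, applying the left exact functor $\mathrm{Hom}_A(M,-)$ to $0\to X\to N_1\to N_0$ and using the natural isomorphisms $\mathrm{Hom}_A(M,N_i)\cong P_i$ of the equivalence, one obtains a left exact sequence $0\to\mathrm{Hom}_A(M,X)\to P_1\to P_0$; comparison with $0\to\ker g\to P_1\to P_0$ yields $\mathrm{Hom}_A(M,X)\cong\ker g$, which is (up to projective summands) $\Omega^2 Y$. I would then show that $X$ admits a finite $M$-resolution if and only if $\mathrm{Hom}_A(M,X)$ has finite projective dimension in $\Gamma\mbox{-mod}$: the forward direction is immediate because an $M$-resolution that stays exact under $\mathrm{Hom}_A(M,-)$ becomes a finite projective resolution under the equivalence; the reverse direction applies $M\otimes_\Gamma-$ (exact, and sending $\Gamma\mbox{-proj}$ to $\mathrm{add}\; M$) to a finite projective resolution, using that the counit $M\otimes_\Gamma\mathrm{Hom}_A(M,X)\to X$ is an isomorphism (this is where Lemma \ref{lem:1}(1), equivalently the full faithfulness of the right adjoint, enters), and re-applying $\mathrm{Hom}_A(M,-)$ recovers the original resolution, verifying the ``remains exact'' condition.

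Combining these pieces finishes both directions. If $\Gamma$ is a partial resolution and $Y\in\mathcal{N}$, the associated $X$ has a finite $M$-resolution, so $\mathrm{Hom}_A(M,X)\cong\Omega^2 Y$ has finite projective dimension, hence so does $Y$. Conversely, if every module in $\mathcal{N}$ has finite projective dimension, then for any given $0\to X\to N^1\to N^2\to 0$ with $N^i\in\mathrm{add}\; M$, the associated $Z\in\mathcal{N}$ has finite projective dimension, hence $\Omega^2 Z\cong\mathrm{Hom}_A(M,X)$ does too, producing the required finite $M$-resolution of $X$. The main obstacle I expect is the careful bookkeeping of the syzygy identification $\mathrm{Hom}_A(M,X)\cong\Omega^2$ together with checking that the counit is an isomorphism on the relevant objects; once this is cleanly in place, both implications are essentially two sides of the same adjunction argument.
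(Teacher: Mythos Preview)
Your proposal is correct and follows essentially the same route as the paper: both arguments use the equivalence $\mathrm{Hom}_A(M,-)\colon \mathrm{add}\;M\simeq \Gamma\mbox{-proj}$ to translate a presentation $P^1\to P^2\to Y\to 0$ of $Y\in\mathcal{N}$ into a short exact sequence $0\to X\to N^1\to N^2\to 0$ with $N^i\in\mathrm{add}\;M$, identify $\mathrm{Hom}_A(M,X)$ with the second syzygy of $Y$, and then pass between finite $M$-resolutions of $X$ and finite projective resolutions of $\mathrm{Hom}_A(M,X)$. The paper is terser (it simply says the ``only if'' part follows by reversing the argument), whereas you spell out the counit/unit bookkeeping explicitly; this extra care is justified and does not change the substance of the argument.
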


 \begin{proof}
 For the ``if" part, assume that $\Gamma$ is a partial resolution of $A$, and let $_\Gamma Y$ be a module in $\mathcal{N}$, that is, $M\otimes_\Gamma Y\simeq 0$. Take an exact sequence $0\rightarrow Y'\rightarrow P^{1}\stackrel{f}\rightarrow P^2\rightarrow Y\rightarrow 0$ in $\Gamma\mbox{-mod}$ such that each $P^i$ is projective. Recall the equivalence ${\rm Hom}_A(M, -)\colon {\rm add}\; M\rightarrow \Gamma\mbox{-proj}$. Then there is a map $g\colon N^{1}\rightarrow N^2$ with $N^i\in {\rm add}\; M$ such that ${\rm Hom}_A(M, g)$ is identified with $f$. Then $M\otimes_\Gamma Y\simeq 0$ implies that $g$ is epic. Moreover, if $X={\rm Ker}\; g$, then $Y'\simeq {\rm Hom}_A(M, X)$. Then by assumption $X$ admits a finite $M$-resolution $0\rightarrow N^{-n}\rightarrow N^{1-n}\rightarrow \cdots \rightarrow N^{-1}\rightarrow N^0\rightarrow X\rightarrow 0$ with each $N^{-i}\in {\rm add}\; M$. Applying ${\rm Hom}_A(M, -)$ to it, we obtain a finite projective resolution of $_\Gamma Y'$. In particular, $_\Gamma Y$ has finite projective dimension. The ``only if" part follows by reversing the argument.
 \end{proof}

Recall that $\langle A\rangle$ in $\mathbf{D}^b(A\mbox{-mod})$ equals the subcategory formed by perfect complexes. The singularity category is given by $\mathbf{D}_{\rm sg}(A)=\mathbf{D}^b(A\mbox{-mod})/\langle A \rangle$. We denote by $q\colon \mathbf{D}^b(A\mbox{-mod})\rightarrow \mathbf{D}_{\rm sg}(A)$ the quotient functor.

The following observation is due to \cite[Proposition 3.3]{KY} in a slightly different setting. We include the proof for completeness.

\begin{lem}\label{lem:3}
Keep the notation as above. Then the functor $M\otimes_\Gamma -\colon \Gamma\mbox{-{\rm mod}}\rightarrow A\mbox{-{\rm mod}}$ induces a triangle $\mathbf{D}_{\rm sg}(\Gamma)/\langle q(\mathcal{N})\rangle \stackrel{\sim}\longrightarrow \mathbf{D}_{\rm sg}(A)/\langle q(M)\rangle$.
\end{lem}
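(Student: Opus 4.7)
The plan is to derive the equivalence from Lemma~\ref{lem:1}(4) by taking suitable further Verdier quotients on both sides and invoking the standard iterated-quotient identity: for thick subcategories $\mathcal{S}\subseteq\mathcal{U}$ of a triangulated category $\mathcal{T}$, with projection $\pi\colon\mathcal{T}\to\mathcal{T}/\mathcal{S}$, one has a canonical triangle equivalence $\mathcal{T}/\mathcal{U}\stackrel{\sim}\longrightarrow(\mathcal{T}/\mathcal{S})/\langle\pi(\mathcal{U})\rangle$.

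Write $\langle\Gamma,\mathcal{N}\rangle$ for the thick subcategory of $\mathbf{D}^b(\Gamma\mbox{-{\rm mod}})$ generated by $\Gamma$ together with $\mathcal{N}$. Applying the identity with $\mathcal{T}=\mathbf{D}^b(\Gamma\mbox{-{\rm mod}})$, $\mathcal{S}=\langle\Gamma\rangle$ and $\mathcal{U}=\langle\Gamma,\mathcal{N}\rangle$, I obtain
$$\mathbf{D}_{\rm sg}(\Gamma)/\langle q(\mathcal{N})\rangle \;\simeq\; \mathbf{D}^b(\Gamma\mbox{-{\rm mod}})/\langle\Gamma,\mathcal{N}\rangle.$$
On the other side, since $_AM$ is a generator, $A\in{\rm add}\,M\subseteq\langle M\rangle$, so $\langle A\rangle\subseteq\langle M\rangle$; the same identity with $\mathcal{T}=\mathbf{D}^b(A\mbox{-{\rm mod}})$, $\mathcal{S}=\langle A\rangle$ and $\mathcal{U}=\langle M\rangle$ then gives
$$\mathbf{D}_{\rm sg}(A)/\langle q(M)\rangle \;\simeq\; \mathbf{D}^b(A\mbox{-{\rm mod}})/\langle M\rangle.$$

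It therefore suffices to build a triangle equivalence between $\mathbf{D}^b(\Gamma\mbox{-{\rm mod}})/\langle\Gamma,\mathcal{N}\rangle$ and $\mathbf{D}^b(A\mbox{-{\rm mod}})/\langle M\rangle$ induced by $M\otimes_\Gamma-$. Starting from the equivalence $F\colon \mathbf{D}^b(\Gamma\mbox{-{\rm mod}})/\langle\mathcal{N}\rangle\stackrel{\sim}\longrightarrow\mathbf{D}^b(A\mbox{-{\rm mod}})$ of Lemma~\ref{lem:1}(4) and using $F(\Gamma)=M\otimes_\Gamma\Gamma\simeq M$, the fact that $F$ is an equivalence shows that $F$ carries the thick subcategory generated by the class of $\Gamma$ precisely onto $\langle M\rangle$. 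A third application of the identity (with $\mathcal{T}=\mathbf{D}^b(\Gamma\mbox{-{\rm mod}})$, $\mathcal{S}=\langle\mathcal{N}\rangle$, $\mathcal{U}=\langle\Gamma,\mathcal{N}\rangle$) expresses $\mathbf{D}^b(\Gamma\mbox{-{\rm mod}})/\langle\Gamma,\mathcal{N}\rangle$ as a Verdier quotient of the source of $F$, and transporting this along $F$ yields the required equivalence. Concatenating the three identifications proves the lemma; throughout, the resulting functor is induced by $M\otimes_\Gamma-$ because this functor is exact by Lemma~\ref{lem:1}(2), annihilates $\mathcal{N}$, and sends $\Gamma$ to $M$. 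The argument is essentially formal once Lemma~\ref{lem:1}(4) and the iterated-quotient identity are in hand, so the main work is simply keeping the various subcategories straight.
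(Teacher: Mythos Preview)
Your argument is correct and is essentially the same as the paper's: both start from Lemma~\ref{lem:1}(4), observe that $F(\Gamma)\simeq M$, and then apply Verdier's iterated-quotient identity (cited in the paper as \cite[\S2, 4-3 Corollaire]{Ver77}) to rewrite both sides as $\mathbf{D}^b(\Gamma\mbox{-mod})/\langle\Gamma,\mathcal{N}\rangle$ and $\mathbf{D}^b(A\mbox{-mod})/\langle M\rangle$ respectively. The only difference is presentational: the paper takes the quotients in the order $(\mathbf{D}^b(\Gamma\mbox{-mod})/\langle\mathcal{N}\rangle)/\langle\Gamma\rangle$ first and then invokes the identity, whereas you name the common thick subcategory $\langle\Gamma,\mathcal{N}\rangle$ explicitly and reach it from both orderings.
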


\begin{proof}
Recall from Lemma \ref{lem:1}(4) the triangle equivalence $\mathbf{D}^b(\Gamma\mbox{-{\rm mod}})/\langle \mathcal{N}\rangle \stackrel{\sim}\longrightarrow \mathbf{D}^b(A\mbox{-{\rm mod}})$ induced by $M\otimes_\Gamma -$. In particular, it sends $\Gamma$ to $M$. Hence, it induces a triangle equivalence $$\frac{\mathbf{D}^b(\Gamma\mbox{-{\rm mod}})/\langle \mathcal{N}\rangle}{\langle \Gamma \rangle} \stackrel{\sim}\longrightarrow \mathbf{D}^b(A\mbox{-{\rm mod}})/\langle M\rangle.$$
Since $_AM$ is a generator, we have $\langle A\rangle\subseteq \langle M\rangle \subseteq \mathbf{D}^b(A\mbox{-{\rm mod}})$. It follows from \cite[\S2, 4-3 Corollaire]{Ver77} that $\mathbf{D}^b(A\mbox{-{\rm mod}})/\langle M\rangle=\mathbf{D}_{\rm sg}(\Gamma)/\langle q(M)\rangle$. For the same reason,  $ (\mathbf{D}^b(\Gamma\mbox{-{\rm mod}})/\langle \mathcal{N}\rangle)/{\langle \Gamma \rangle}$ is identified with $\mathbf{D}_{\rm sg}(\Gamma)/\langle q(\mathcal{N})\rangle $. Then we are done.
\end{proof}

\vskip 5pt

\noindent {\bf Proof of Proposition \ref{prop:1}}.\quad Recall that for an $A$-module $X$, $q(X)\simeq 0$ in $\mathbf{D}_{\rm sg}(A)$ if and only if $X$ has finite projective dimension. Hence by Lemma \ref{lem:2}, all objects in $q(\mathcal{N})$ are isomorphic to zero. Then the result follows from Lemma \ref{lem:3}. \hfill $\square$

\vskip 10pt

We observe the following immediate consequence.

\begin{cor}
Let $A$ be a left coherent ring and $_AM$ a generator. Assume that $\Gamma={\rm End}_A(M)^{\rm op}$ is left coherent such that each finitely presented $\Gamma$-module has finite projective dimension. Then $\Gamma$ is a partial resolution of $A$ and $\mathbf{D}_{\rm sg}(A)=\langle q(M)\rangle$.
\end{cor}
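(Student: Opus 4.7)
The plan is to deduce both assertions from the results already established: first use Lemma \ref{lem:2} to get the partial resolution property, then apply Proposition \ref{prop:1} to collapse $\mathbf{D}_{\rm sg}(\Gamma)$ to zero.

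First I would verify that $\Gamma$ is a partial resolution. By definition, every object $Y$ of $\mathcal{N}$ is a finitely presented $\Gamma$-module, and by hypothesis every such module has finite projective dimension. Thus the ``if and only if'' criterion provided by Lemma \ref{lem:2} is satisfied, so $\Gamma$ is a partial resolution of $A$.

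Next I would argue that $\mathbf{D}_{\rm sg}(\Gamma)=0$. Since $\Gamma$ is left coherent, every finitely presented $\Gamma$-module admits a projective resolution by finitely generated projectives; combined with the finite projective dimension hypothesis, this resolution can be taken to be finite. Hence every object of $\Gamma\mbox{-mod}$ is perfect when viewed as a stalk complex, and a standard thick-subcategory argument extends this to all of $\mathbf{D}^b(\Gamma\mbox{-mod})$, giving $\mathbf{D}^b(\Gamma\mbox{-mod})=\langle \Gamma\rangle$ and therefore $\mathbf{D}_{\rm sg}(\Gamma)=0$.

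Finally, Proposition \ref{prop:1} provides a triangle equivalence $\mathbf{D}_{\rm sg}(\Gamma)\simeq \mathbf{D}_{\rm sg}(A)/\langle q(M)\rangle$. The vanishing of the left-hand side forces $\mathbf{D}_{\rm sg}(A)/\langle q(M)\rangle=0$, which (since $\langle q(M)\rangle$ is thick by definition) means exactly that $\mathbf{D}_{\rm sg}(A)=\langle q(M)\rangle$. There is no real obstacle here: both assertions are essentially immediate once one observes that the objects of $\mathcal{N}$ are among the finitely presented $\Gamma$-modules, so the only minor point worth spelling out is the coherence argument showing that a finitely presented module of finite projective dimension over a coherent ring is perfect.
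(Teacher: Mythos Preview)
Your proposal is correct and follows exactly the same route as the paper: invoke Lemma~\ref{lem:2} for the partial resolution property, observe that the hypothesis forces $\mathbf{D}_{\rm sg}(\Gamma)=0$, and then read off the conclusion from Proposition~\ref{prop:1}. The paper's proof is just a terser version of what you wrote.
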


In the situation of this corollary, we might even call $\Gamma$ a \emph{resolution} of $A$. Such a resolution always exists for any left artinian ring; see \cite[Chapter III, Section 3, Theorem]{Au}.

\begin{proof}
The first statement follows from Lemma \ref{lem:2}. By assumption, $\mathbf{D}_{\rm sg}(\Gamma)=0$ and thus the second statement follows from Proposition \ref{prop:1}.
\end{proof}

\section{Consequences and Examples}

We draw some consequences of Proposition \ref{prop:1} for artin algebras. In this section, $A$ will be an artin algebra over a commutative artinian ring $R$.

\subsection{} Recall that an $A$-module $M$ is \emph{Gorenstein projective} provided that $M$ is reflexive, ${\rm Ext}_A^i(M, A)=0={\rm Ext}_{A^{\rm op}}^i(A, M^*)$ for all $i\geq 1$. Here, $M^*={\rm Hom}_A(M, A)$. Any projective module is Gorenstein projective.

Denote by $A\mbox{-Gproj}$ the full subcategory of $A\mbox{-mod}$ consisting of Gorenstein projective modules. It is closed under extensions and kernels of surjective maps. In particular, $A\mbox{-Gproj}$ is a Frobenius exact category whose projective-injective objects are precisely projective $A$-modules. Denote by $A\mbox{-\underline{Gproj}}$ the stable category; it is naturally triangulated by \cite[Theorem I.2.8]{Hap88}. The Hom spaces in the stable category are denoted by $\underline{\rm Hom}$.

Recall from \cite[Theorem 1.4.4]{Buc} or  \cite[Theorem 4.6]{Hap91} that there is a  full triangle embedding $F_A\colon A\mbox{-\underline{Gproj}}\rightarrow \mathbf{D}_{\rm sg}(A)$ sending $M$ to $q(M)$. Moreover, $F_A$ is dense, thus a triangle equivalence if and only if $A$ is \emph{Gorenstein}, that is, the injective dimension of the regular $A$-module has finite injective dimension on both sides.

A full subcategory $\mathcal{C}\subseteq A\mbox{-Gproj}$ is \emph{thick} if it contains  all projective modules and for any short exact sequence $0\rightarrow M_1\rightarrow M_2\rightarrow M_3\rightarrow 0$ with terms in $A\mbox{-Gproj}$, all $M_i$ lie in $\mathcal{C}$ provided that two of them lie in $\mathcal{C}$. In this case, the stable category $\underline{\mathcal{C}}$ is a triangulated subcategory of $A\mbox{-\underline{Gproj}}$, and thus via $F_A$, a triangulated subcategory of $\mathbf{D}_{\rm sg}(A)$.

\begin{prop}\label{cor:1}
Let $_AM$ be a generator which is Gorenstein projective such that ${\rm add}\; M$ is a thick subcategory of $A\mbox{-{\rm Gproj}}$. Then $\Gamma={\rm End}_A(M)^{\rm op}$ is a partial resolution of $A$ and thus we have  a triangle equivalence
$$\mathbf{D}_{\rm sg}(\Gamma) \stackrel{\sim}\longrightarrow \mathbf{D}_{\rm sg}(A)/{\underline{{\rm add}}\; M},$$
which is further triangle equivalent to $q(M)^\perp$.
\end{prop}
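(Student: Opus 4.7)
The plan is to establish the three assertions in sequence, with the equivalence with $q(M)^\perp$ the main obstacle.

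\emph{Partial resolution property.} Note first that $\Gamma$ is automatically an artin algebra, hence left coherent. Suppose an $A$-module $X$ fits into a short exact sequence $0 \to X \to N^1 \to N^2 \to 0$ with $N^i \in {\rm add}\; M$. Each $N^i$ is Gorenstein projective, and since $A\mbox{-Gproj}$ is closed under kernels of surjections, $X$ is Gorenstein projective as well. Applying thickness of ${\rm add}\; M$ in $A\mbox{-Gproj}$ to this sequence, whose two outer terms lie in ${\rm add}\; M$, forces $X \in {\rm add}\; M$. Hence the sequence $X \stackrel{\mathrm{id}}\to X$ is a trivial finite $M$-resolution, $\Gamma$ is a partial resolution of $A$, and Proposition \ref{prop:1} yields the triangle equivalence $\mathbf{D}_{\rm sg}(\Gamma) \stackrel{\sim}\longrightarrow \mathbf{D}_{\rm sg}(A)/\langle q(M)\rangle$ induced by $M\otimes_\Gamma-$.

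\emph{Identification of $\langle q(M)\rangle$ with $\underline{{\rm add}}\; M$.} The Frobenius structure on $A\mbox{-Gproj}$ together with the thickness hypothesis makes $\underline{{\rm add}}\; M$ a triangulated subcategory of $A\mbox{-}\underline{\mathrm{Gproj}}$ closed under direct summands. To see closure under shifts, embed $M \to P \to \Sigma M$ with $P$ projective (hence in ${\rm add}\; M$, since $M$ is a generator) and invoke thickness; the argument for $\Sigma^{-1}$ is analogous. Via the fully faithful embedding $F_A$, this becomes a triangulated subcategory of $\mathbf{D}_{\rm sg}(A)$, still closed under summands. Since $q(M) = F_A(M) \in \underline{{\rm add}}\; M$, we have $\langle q(M)\rangle \subseteq \underline{{\rm add}}\; M$; conversely, every object of $\underline{{\rm add}}\; M$ is a summand of some $q(M)^{\oplus n}$ and therefore lies in $\langle q(M)\rangle$. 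This yields the displayed equivalence $\mathbf{D}_{\rm sg}(\Gamma) \stackrel{\sim}\longrightarrow \mathbf{D}_{\rm sg}(A)/\underline{{\rm add}}\; M$.

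\emph{Equivalence with $q(M)^\perp$.} This is the main obstacle. I plan to show that $\underline{{\rm add}}\; M$ is right admissible in $\mathbf{D}_{\rm sg}(A)$, so that the natural composite $q(M)^\perp \hookrightarrow \mathbf{D}_{\rm sg}(A) \to \mathbf{D}_{\rm sg}(A)/\underline{{\rm add}}\; M$ becomes a triangle equivalence. Vanishing of $\underline{{\rm add}}\; M \cap q(M)^\perp$ is immediate: if $X$ lies in the intersection, then $X$ is a summand of $q(M)^{\oplus n}$, so ${\rm End}(X)$ embeds into $\mathrm{Hom}(q(M)^{\oplus n}, X) = 0$, forcing $X = 0$. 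For essential surjectivity, the plan is to construct, for each $X \in \mathbf{D}_{\rm sg}(A)$, an approximation triangle $C \to X \to Y \to C[1]$ with $C \in \underline{{\rm add}}\; M$ and $Y \in q(M)^\perp$. One begins with a right ${\rm add}\; M$-approximation $M^n \to X$ at the module level, which exists because ${\rm add}\; M$ is contravariantly finite in $A\mbox{-mod}$ (standard for finitely generated $M$ over an artin algebra), and then iterates on the cone to successively kill $\mathrm{Hom}(q(M)[i], -)$ for increasing $i$; the Gorenstein projectivity of $M$ is what controls the higher $\mathrm{Ext}^i_A(M,-)$ groups and makes the process convergent. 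The principal technical difficulty, where I expect the main work to lie, is assembling this iterative construction into a single honest triangle in $\mathbf{D}_{\rm sg}(A)$ whose cone genuinely belongs to $q(M)^\perp$; this is the point at which the combined hypotheses that $M$ is Gorenstein projective and that ${\rm add}\; M$ is thick in $A\mbox{-Gproj}$ must carry the argument.
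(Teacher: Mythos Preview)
Your first two steps agree with the paper: the partial-resolution check and the identification $\langle q(M)\rangle=\underline{\rm add}\;M$ are exactly as in the published proof (you just spell out the latter in more detail).

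The divergence is in the third step, and your proposed route is harder than necessary. The paper does not build the approximation triangle by an iterative d\'evissage. Instead it observes, via \cite[Proposition~1.21]{Or04}, that ${\rm Hom}_{\mathbf{D}_{\rm sg}(A)}(q(M),X)$ is a finite-length $R$-module for every $X$. Since $\underline{\rm add}\;M$ is additively generated by the single object $q(M)$ and is idempotent complete, the cohomological functor ${\rm Hom}_{\mathbf{D}_{\rm sg}(A)}(-,X)$ restricted to $\underline{\rm add}\;M$ is then representable; this is precisely right admissibility in the sense of Bondal--Kapranov, and \cite[Proposition~1.6]{BK} gives the equivalence $q(M)^\perp \simeq \mathbf{D}_{\rm sg}(A)/\underline{\rm add}\;M$ at once. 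No iteration, no termination argument.

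Your plan, by contrast, starts from a module-level right ${\rm add}\;M$-approximation and proposes to iterate on cones. The difficulty you flag is real: to make that iteration terminate you would in any case need a finiteness input, and the natural one is exactly the Hom-finiteness from Orlov that the paper invokes. Once you have that, the representability argument short-circuits the whole construction. So your approach is not wrong in spirit, but it overlooks the decisive simplification; the ``principal technical difficulty'' you anticipate dissolves once you use finite length of the Hom spaces rather than trying to build the approximation triangle by hand.
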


Here, $q(M)^\perp$ denotes the full subcategory of $\mathbf{D}_{\rm sg}(A)$ consisting of objects $X$ with ${\rm Hom}_{\mathbf{D}_{\rm sg}(A)}(q(M), X)=0$. Since ${\rm add}\; M$ is a thick subcategory of  $A\mbox{-{\rm Gproj}}$, it follows that $q(M)^\perp$ is a triangulated subcategory of $\mathbf{D}_{\rm sg}(A)$.

\begin{proof}
For any exact sequence $0\rightarrow X\rightarrow N^1\rightarrow N^2\rightarrow 0$ with $N^i\in {\rm add}\; M$, we have that $X$ is Gorenstein projective, since $A\mbox{-Gproj}$ is closed under kernels of surjective maps. Then by assumption, $X$ lies in ${\rm add}\; M$, in particular, $X$ admits a finite $M$-resolution. Then $\Gamma$ is a partial resolution of $A$. We note that $\langle q(M)\rangle = \underline{{\rm add}}\; M$. Then the first equivalence follows from Proposition \ref{prop:1} immediately.

 For the second one, we note by \cite[Proposition 1.21]{Or04} that for each object $X\in \mathbf{D}_{\rm sg}(A)$, ${\rm Hom}_{\mathbf{D}_{\rm sg}(A)}(q(M), X)$ is a finite length $R$-module and then the cohomological functor ${\rm Hom}_{\mathbf{D}_{\rm sg}(A)}(q(M), -)\colon \underline{\rm add}\; M\rightarrow R\mbox{-mod}$ is representable. Hence, the triangulated subcategory $\underline{\rm add}\; M$ of $\mathbf{D}_{\rm sg}(A)$ is right admissible in the sense of \cite{BK}. Then the result follows from \cite[Proposition 1.6]{BK}.
\end{proof}

A special case is of independent interest: an algebra $A$ is \emph{CM-finite} provided that there exists a module $M$ such that $A\mbox{-Gproj}={\rm add}\; M$. In this case,  we obtain a triangle equivalence
$$\mathbf{D}_{\rm sg}(\Gamma) \stackrel{\sim}\longrightarrow \mathbf{D}_{\rm sg}(A)/{A\mbox{-\underline{Gproj}}}.$$
In particular,  $\Gamma$ has finite global dimension if and
only if $A$ is Gorenstein. This triangle equivalence is due to \cite[Theorem 1.2]{KZ}.

For the following example, we recall that any semisimple abeian category $\mathcal{A}$ has a unique (trivial) triangulated structure with the translation functor given by any auto-equivalence $\Sigma$. This triangulated category is denoted by $(\mathcal{A}, \Sigma)$.

\begin{exm}
Let $k$ be a field. Consider the algebra $A$ given by the following quiver with relations $\{\beta\alpha\gamma \beta \alpha, \alpha \gamma \beta \alpha \gamma \beta\}$
\[\xymatrix{
1  \ar[r]^\alpha & 2\ar[r]^\beta &  3. \ar@/^0.7pc/[ll]^\gamma
}\]
Here, we write the concatenation of arrows from right to left. The algebra $A$ is Nakayama with admissible sequence $(5, 6, 6)$.

For each vertex $i$, we associate the simple $A$-module $S_i$. Consider the unique indecomposable module $S_2^{[3]}$ with top $S_2$ and length $3$; it is the unique indecomposable non-projective Gorenstein projective $A$-module; see \cite[Proposition 3.14(3)]{CYe} or \cite{Rin}. Set $M=A\oplus S_2^{[3]}$. Then ${\rm add}\; M=A\mbox{-{\rm Gproj}}$. The algebra $\Gamma={\rm End}_A(M)^{\rm op}$ is given by the following quiver with relations $\{ab, \beta b a \alpha, ba-\alpha \gamma \beta\}$
\[\xymatrix{
& 2'\ar@/^0.5pc/[d]^b \\
1  \ar[r]^\alpha & 2 \ar@/^0.5pc/[u]^a \ar[r]^\beta &  3. \ar@/^0.7pc/[ll]^\gamma
}\]
Then by Proposition \ref{cor:1} we have triangle equivalences
$$\mathbf{D}_{\rm sg}(\Gamma) \stackrel{\sim}\longrightarrow \mathbf{D}_{\rm sg}(A)/{A\mbox{-\underline{\rm Gproj}}} \stackrel{\sim}\longrightarrow q(S_2^{[3]})^\perp.$$
Here, we recall that $q(S_2^{[3]})^\perp=q(M)^\perp$ is the triangulated subcategory of $\mathbf{D}_{\rm sg}(A)$ consisting of objects $X$ with ${\rm Hom}_{\mathbf{D}_{\rm sg}(A)}(q(S_2^{[3]}), X)=0$.  The singularity category $\mathbf{D}_{\rm sg}(A)$ is triangle equivalent to the stable category $A'\mbox{-\underline{\rm mod}}$ for an elementary connected Nakayama algebra $A'$ with admissible sequence $(4,4)$ such that $q(S_2^{[3]})$ corresponds to an $A'$-module of length $2$; see \cite[Corollary 3.11]{CYe}.

 Then explicit calculation  in $A'\mbox{-\underline{\rm mod}}$ yields that $q(S_2^{[3]})^\perp$ is equivalent to $k\times k\mbox{-{\rm mod}}$; moreover, the translation functor $\Sigma$ is induced by the algebra automorphism of $k\times k$ that switches the coordinates. In summary, we obtain a triangle equivalence
$$\mathbf{D}_{\rm sg}(\Gamma) \stackrel{\sim}\longrightarrow (k\times k\mbox{-{\rm mod}}, \Sigma).$$
We mention that by \cite[Theorem 1.1]{KZ} any Gorenstein projective $\Gamma$-module is projective. In particular, the algebra $\Gamma$ is not Gorenstein.
\end{exm}

\subsection{} Let $k$ be a field and $Q$ a finite quiver without oriented cycles. Consider the path algebra $kQ$ and the algebra  $A=kQ[\epsilon]=kQ\otimes_k k[\epsilon]$ of dual numbers with coefficients in $kQ$, where $k[\epsilon]$ is the algebra of dual numbers. Then $A$ is Gorenstein. The stable category $A\mbox{-\underline{\rm Gproj}}$, which is equivalent to $\mathbf{D}_{\rm sg}(A)$, is studied in \cite{RZ}; compare \cite[Section 5]{CY}.

For an $A$-module $Y$, $\epsilon$ induces a $kQ$-module map $\epsilon_Y\colon Y\rightarrow Y$ satisfying $\epsilon_Y^2=0$. The \emph{cohomology} of $Y$ is defined as $H(Y)={{\rm Ker} \; \epsilon_Y}/{{\rm Im}\; \epsilon_Y}$.
This gives rise to a  functor
$$H\colon A\mbox{-{\rm Gproj}}\longrightarrow kQ\mbox{-mod},$$
which induces a cohomological functor $A\mbox{-\underline{\rm Gproj}}\rightarrow kQ\mbox{-mod}$. It follows that for each subcategory $\mathcal{C}$ of $kQ\mbox{-mod}$ which is closed under extensions, kernels of surjective maps and cokernels of injective maps, the corresponding subcategory $H^{-1}(\mathcal{C})$ of $A\mbox{-Gproj}$ is thick.

Recall that the path algebra $kQ$ is hereditary. For each $kQ$-module $X$, consider its minimal projective $kQ$-resolution $0\rightarrow P^{-1}\stackrel{d}\rightarrow P^0\rightarrow X\rightarrow 0$. Set $\eta(X)=P^{-1}\oplus P^0$; it is an $A$-module such that $\epsilon$ acts on $P^{-1}$ by $d$ and on $P^0$ as zero. If $X$ is indecomposable, the $A$-module $\eta(X)$ is Gorenstein projective which is indecomposable and non-projective. Observe that $H(\eta(X))\simeq X$. Indeed, if $Y$ is an indecomposable non-projective Gorenstein projective $A$-module satisfying $H(Y)\simeq X$, then $Y$ is isomorphic to $\eta(X)$.  For two $kQ$-modules $X$ and $X'$, we have a natural isomorphism
\begin{align}\label{equ:1}
\underline{\rm Hom}_A(\eta(X), \eta(X'))\simeq {\rm Hom}_{kQ}(X, X')\oplus {\rm Ext}^1_{kQ}(X, X').
\end{align}
We refer the details to \cite[Theorems 1 and 2]{RZ}.

Let $E$ be an \emph{exceptional} $kQ$-module, that is, an indecomposable $kQ$-module such that ${\rm Ext}^1_{kQ}(E, E)=0$. It is well known that the full subcategory $E^{\perp_{0,1}}$ of $kQ\mbox{-mod}$ consisting of modules $X$ with ${\rm Hom}_{kQ}(E, X)=0={\rm Ext}^1_{kQ}(E, X)$ is equivalent to $kQ'\mbox{-mod}$ for some quiver $Q'$; moreover, the number of vertices of $Q'$ is less than the number of vertices of $Q$ by one; see \cite[Theorem 2.3]{Sch}.

 \begin{prop}\label{prop:2}
 Keep the notation as above. Set $M=A\oplus \eta(E)$ and $\Gamma={\rm End}_A(M)^{\rm op}$. The following statements hold.
 \begin{enumerate}
 \item ${\rm add}\; M$ is a thick subcategory of $A\mbox{-{\rm Gproj}}$, and the corresponding stable category $\underline{\rm add} \;M$ is triangle equivalent to $(k\mbox{-{\rm mod}}, {\rm Id}_{k\mbox{-}{\rm mod}})$;
 \item $q(M)^\perp$ is triangle equivalent to $kQ'[\epsilon]\mbox{-\underline{\rm Gproj}}$;
 \item there is a triangle equivalence $\mathbf{D}_{\rm sg}(\Gamma) \stackrel{\sim}\longrightarrow kQ'[\epsilon]\mbox{-\underline{\rm Gproj}}$.
 \end{enumerate}

 \end{prop}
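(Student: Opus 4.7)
The plan is to prove (1), (2), (3) in that order. Part (3) is essentially formal: part (1) verifies the thickness hypothesis of Proposition \ref{cor:1} (the fact that $M=A\oplus\eta(E)$ is a Gorenstein projective generator is immediate), whence Proposition \ref{cor:1} supplies $\mathbf{D}_{\rm sg}(\Gamma)\simeq q(M)^\perp$, and combining with (2) gives (3). The substance thus lies in (1) and (2).

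For (1), I would first identify ${\rm add}\; M$ with $H^{-1}({\rm add}\; E)$ inside $A\mbox{-Gproj}$. The inclusion $\subseteq$ uses $H(A)=0$ and $H(\eta(E))\simeq E$. For $\supseteq$, take an indecomposable non-projective Gorenstein projective $Y$ in the preimage: the uniqueness statement recalled in the excerpt gives $Y\simeq \eta(H(Y))$, and since $H(Y)$ is then an indecomposable summand of $E^n$, we obtain $H(Y)\simeq E$ and hence $Y\simeq \eta(E)$. Because $E$ is exceptional with ${\rm End}_{kQ}(E)\simeq k$, the subcategory ${\rm add}\; E$ is semisimple and has no self-extensions, and is in particular closed under extensions, kernels of surjections, and cokernels of injections; the paragraph preceding the proposition then yields thickness of ${\rm add}\; M$. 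For the stable description, (\ref{equ:1}) together with exceptionality of $E$ gives $\underline{\rm End}_A(\eta(E))\simeq k$, so the assignment $k\mapsto \eta(E)$ defines an additive equivalence $k\mbox{-mod}\stackrel{\sim}\to \underline{\rm add}\; M$. The translation of $A\mbox{-\underline{Gproj}}$ restricts to a self-equivalence of $\underline{\rm add}\; M$ fixing the unique indecomposable $\eta(E)$; any such self-equivalence of $k\mbox{-mod}$ is isomorphic to ${\rm Id}_{k\mbox{-mod}}$.

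For (2), since $A$ is Gorenstein, $F_A$ is a triangle equivalence and identifies $q(M)^\perp$ with $\eta(E)^\perp\subseteq A\mbox{-\underline{Gproj}}$. Every $Y\in A\mbox{-Gproj}$ is stably isomorphic to $\eta(H(Y))$, since $H$ kills projective $A$-modules and $\eta$ is additive; combined with (\ref{equ:1}) this yields
\begin{align*}
\underline{\rm Hom}_A(\eta(E), Y)\simeq {\rm Hom}_{kQ}(E, H(Y))\oplus {\rm Ext}^1_{kQ}(E, H(Y)),
\end{align*}
so $Y\in \eta(E)^\perp$ if and only if $H(Y)\in E^{\perp_{0,1}}$. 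This identifies $\eta(E)^\perp$ with the stable category of the thick subcategory $\mathcal{T}:=H^{-1}(E^{\perp_{0,1}})\cap A\mbox{-Gproj}$. Let $\Phi\colon kQ'\mbox{-mod}\stackrel{\sim}\to E^{\perp_{0,1}}$ be the equivalence of hereditary abelian categories from \cite[Theorem 2.3]{Sch}. The assignment $\eta'(X')\mapsto \eta(\Phi(X'))$ on indecomposable non-projective objects, extended additively and killing projectives stably, gives a correspondence between $kQ'[\epsilon]\mbox{-\underline{Gproj}}$ and $\underline{\mathcal{T}}$; applying (\ref{equ:1}) on each side and using that $\Phi$ preserves ${\rm Hom}$ and ${\rm Ext}^1$ makes this correspondence fully faithful, and density is immediate from the bijection on indecomposables.

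The main obstacle will be confirming in (2) that this additive equivalence respects the triangulated structures. The difficulty is that $\Phi$ need not send $kQ'$-projectives to projective objects of $kQ\mbox{-mod}$, so there is no obvious algebra homomorphism $kQ'[\epsilon]\to A$ realizing the equivalence intrinsically. My strategy is to avoid any algebra-level lift and to work purely in the stable categories, where the translation on each side is given by cosyzygy in the respective Frobenius category; using (\ref{equ:1}) to encode all stable Hom data in terms of ${\rm Hom}\oplus{\rm Ext}^1$ in the hereditary categories reduces the compatibility check for the translation to the exactness of $\Phi$, which is automatic.
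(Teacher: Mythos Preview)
Your overall architecture matches the paper exactly: establish (1), then (2), then read off (3) from Proposition~\ref{cor:1}. For (1) you and the paper both identify ${\rm add}\,M$ with $H^{-1}({\rm add}\,E)$ and deduce thickness from the closure properties of ${\rm add}\,E$; the only difference is that the paper cites \cite[Proposition~4.10]{RZ} for the translation acting trivially on $\underline{\rm add}\,M$, while you argue more elementarily that any self-equivalence of $k\mbox{-mod}$ is isomorphic to the identity. Both are correct.

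For (2) you again agree with the paper through the identification of $q(M)^\perp$ with the stable category of $H^{-1}(E^{\perp_{0,1}})$. At that point the paper simply invokes \cite[Theorem~1]{RZ} (or alternatively \cite[Sections~3 and 5]{CY}): that theorem describes $H[\epsilon]\mbox{-}\underline{\rm Gproj}$ intrinsically in terms of the hereditary abelian category $H\mbox{-mod}$, so the abelian equivalence $\Phi\colon kQ'\mbox{-mod}\stackrel{\sim}\to E^{\perp_{0,1}}$ transports directly to a \emph{triangle} equivalence of the associated stable categories. Your hand-built correspondence $\eta'(X')\mapsto \eta(\Phi(X'))$ is the right bijection on indecomposables, but there is a genuine gap in upgrading it. First, (\ref{equ:1}) is only an isomorphism of Hom-spaces; to get a \emph{functor} you need its compatibility with composition, which is precisely the extra content of \cite[Theorem~2]{RZ} that you are not invoking. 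Second, even granting an additive equivalence, your stated strategy---``reduce the compatibility check for the translation to the exactness of $\Phi$''---only addresses commutation with $\Sigma$, not preservation of distinguished triangles; the latter would require realizing the equivalence via an exact functor between the Frobenius categories, and as you yourself note there is no algebra map to supply one. The clean fix is exactly what the paper does: appeal to \cite[Theorem~1]{RZ}, which already packages the functoriality and the triangulated compatibility.
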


\begin{proof}
For (1), recall that ${\rm End}_{kQ}(E)\simeq k$. It follows that the subcategory ${\rm add}\; E$ of $kQ\mbox{-mod}$ is  closed under extensions, kernels of surjective maps and cokernels of injective maps. Observe that ${\rm add}\; M=H^{-1}({\rm add}\; E)$. Then the thickness of  ${\rm add}\; M$ follows. The stable category $\underline{\rm add} \;M$ is given by ${\rm add}\; \eta(E)$; moreover, by (\ref{equ:1}) we have $\underline{\rm End}_{A}(\eta(E))\simeq k$. We observe that the translation functor acts on ${\rm add}\; \eta(E)$ as the identity; see \cite[Proposition 4.10]{RZ}. Then we infer (1).

We identify $\mathbf{D}_{\rm sg}(A)$ with $A\mbox{-}\underline{\rm Gproj}$. By (\ref{equ:1}), $q(M)^\perp=q(\eta(E))^\perp$ equals the stable category of $H^{-1}(E^{\perp_{0, 1}})$. Since $E^{\perp_{0, 1}}$ is equivalent to $kQ'\mbox{-mod}$, it follows from \cite[Theorem 1]{RZ} that the stable category of  $H^{-1}(E^{\perp_{0, 1}})$ is triangle equivalent to $kQ'[\epsilon]\mbox{-\underline{\rm Gproj}}$. This equivalence might also be deduced from the results in \cite[Sections 3 and 5]{CY}.

The last statement follows from (2) and Proposition \ref{cor:1}.
\end{proof}

We conclude with an example of Proposition \ref{prop:2}.

\begin{exm}
Let $Q$ be the quiver $1 \stackrel{\alpha}\longrightarrow 2$ and let $A=kQ[\epsilon]$. Denote by $S_1$ the simple $kQ$-module corresponding to the vertex $1$; it is an exceptional $kQ$-module. Then $\eta(S_1)=kQ$  such that $\epsilon$ acts as the multiplication of $\alpha$ from the right. The algebra $\Gamma={\rm End}_A(A\oplus \eta(S_1))^{\rm op}$ is given by the following quiver with relations $\{\beta\alpha, \alpha\delta-\gamma\beta\}$
\[\xymatrix{
1 \ar@<+1.0ex>[r]^-{\alpha} & 2 \ar@<+1.0ex>[l]^-{\delta} \ar@<+1.0ex>[r]^-{\beta} & 3.\ar@<+1.0ex>[l]^-{\gamma}
}
\]
The corresponding quiver $Q'$ in Proposition \ref{prop:2} is a single vertex, and then the stable category $kQ'[\epsilon]\mbox{-\underline{\rm Gproj}}$ is triangle equivalent to $(k\mbox{-{\rm mod}}, {\rm Id}_{k\mbox{-}{\rm mod}})$. Hence, by Proposition \ref{prop:2}(3), we obtain a triangle equivalence
$$\mathbf{D}_{\rm sg}(\Gamma) \stackrel{\sim}\longrightarrow  (k\mbox{-{\rm mod}}, {\rm Id}_{k\mbox{-}{\rm mod}}).$$

We mention that the simple $\Gamma$-module corresponding to the vertex $3$ is localizable, whose corresponding left retraction $L(\Gamma)$ is an elementary connected Nakayama algebra with admissible sequence $(3, 4)$; see \cite{CYe}. Then by \cite[Lemma 3.12(2) and Proposition 2.6]{CYe} any Gorenstein projective $\Gamma$-module is projective; in particular, $\Gamma$ is not Gorenstein. The above triangle equivalence might also be deduced from \cite[Proposition 2.13 and Corollary 3.11]{CYe}.
\end{exm}

\vskip 5pt

\noindent {\bf Acknowledgements}\; The author thanks Professor Pu Zhang for his  talk given in USTC, which inspires this note.

\bibliography{}

\vskip 10pt

 {\footnotesize \noindent Xiao-Wu Chen, School of Mathematical Sciences,
  University of Science and Technology of
China, Hefei 230026, Anhui, PR China \\
Wu Wen-Tsun Key Laboratory of Mathematics, USTC, Chinese Academy of Sciences, Hefei 230026, Anhui, PR China.\\
URL: http://home.ustc.edu.cn/$^\sim$xwchen}

\end{document}